\title{A Note on Extensions of $\mathbb{Q}^{tr}$}
\author{Lukas Pottmeyer}
\address{Technische Universit\"at Darmstadt; Schlo{\ss}gartenstr. 7, 64289 Darmstadt}
\email{pottmeyer@mathematik.tu-darmstadt.de}
\date{\today}
\DeclareMathOperator{\Gal}{Gal}
\newtheorem{theorem}[]{Theorem}
\newtheorem{corollary}[]{Corollary}
\newtheorem{lemma}[]{Lemma}
\newtheorem*{definition}{Definition}
\newtheorem{example}[]{Example}
\newtheorem{question}[]{Question}
\newtheorem*{remark}{Remark}
\begin{document}

\selectlanguage{english}

\begin{abstract}
In this note we investigate the behaviour of the absolute logarithmic Weil-height $h$ on extensions of the field $\mathbb{Q}^{tr}$ of totally real numbers. It is known that there is a gap between $0$ and the next smallest value of $h$ on $\mathbb{Q}^{tr}$, whereas in $\mathbb{Q}^{tr}(i)$ there are elements of arbitrarily small positive height. We prove that all elements of small height in any finite extension of $\mathbb{Q}^{tr}$ already lie in $\mathbb{Q}^{tr}(i)$.
This leads to a positive answer to the question of Amoroso, David and Zannier, if there exists a pseudo algebraically closed field with the mentioned height gap. 
\end{abstract}

\subjclass[2010]{11G50, 12J15}
\keywords{Height bounds, totally real numbers, pseudo algebraically closed fields}
\maketitle

\section{Introduction}
 
We assume throughout the paper that all algebraic extensions of $\mathbb{Q}$ are contained in a fixed algebraic closure $\overline{\mathbb{Q}}\subseteq \mathbb{C}$.
For a definition and basic properties of the height $h$ we refer to the first Chapter of \cite{BG}.
For a number field $K$ we define $$K^{tr}=\{\alpha \in \overline{\mathbb{Q}}\vert \sigma(\alpha) \in \mathbb{R} ~\forall \sigma \in \Gal(\overline{\mathbb{Q}}/K)\}\quad.$$ 
A theorem of Schinzel \cite{Sch73}, Theorem 2, implies $h(\alpha)\geq \frac{1}{2}\log\left(\frac{1+\sqrt{5}}{2}\right)$ for all $\alpha \in \mathbb{Q}^{tr}(i)^*$ which do not lie on the unit circle.
In this note we will generalize this result in the following way.

\begin{theorem}\label{finextensions}
Let $K$ be a number field and $F$ a finite extension of $K^{tr}$. There is an effective positive constant $c_F$ such that $h(\alpha)\geq c_F$ for all $\alpha \in F^*$ with at least one $K$-conjugate not on the unit circle; i.e. for all $\alpha\in F^*$ such that $\vert\sigma(\alpha)\vert \neq 1$ for some $\sigma \in \Gal(\overline{\mathbb{Q}}/K)$.
\end{theorem}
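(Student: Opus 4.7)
\emph{Plan.} The strategy is to transform $\alpha$ into a totally real algebraic number of controlled height and then invoke Schinzel's Theorem~2. As a first step, since $K^{tr}/K$ is Galois one has $\sigma(K^{tr})=K^{tr}$ for every $\sigma\in\Gal(\overline{\mathbb{Q}}/K)$, so $\sigma(F)$ is again a finite extension of $K^{tr}$ of the same degree $n:=[F:K^{tr}]$. Hence if $|\sigma_{0}(\alpha)|\neq 1$ for some $\sigma_{0}$, one may replace $\alpha$ by $\sigma_{0}(\alpha)$ and $F$ by $\sigma_{0}(F)$ without changing either the height or the degree, and thus assume from the outset that $|\alpha|\neq 1$ at the identity embedding.

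Complex conjugation $c$ belongs to $\Gal(\overline{\mathbb{Q}}/K^{tr})$ because $K^{tr}\subseteq\mathbb{R}$, so $L:=F\cdot c(F)$ is a $c$-stable finite extension of $K^{tr}$ of degree at most $n^{2}$. The element $\beta:=\alpha\,c(\alpha)=|\alpha|^{2}$ is then a positive real number different from $1$, lies in $L\cap\mathbb{R}$, and satisfies $h(\beta)\leq 2h(\alpha)$. Letting $\widetilde L$ be the Galois closure of $L$ over $K^{tr}$, the relative norm $\eta:=\mathrm{N}_{\widetilde L/K^{tr}}(\alpha)\in K^{tr}$ is totally real over $K$, with $h(\eta)\leq[\widetilde L:K^{tr}]\,h(\alpha)$. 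If $\eta\notin\{0,\pm 1\}$, a Schinzel-type lower bound for totally-real-over-$K$ algebraic numbers---directly Schinzel's Theorem~2 when $K=\mathbb{Q}$, and for general $K$ obtained by a further norm through the tower $K/\mathbb{Q}$ using $\mathbb{Q}^{tr}\subseteq K^{tr}$---produces $h(\eta)\geq c>0$, whence $h(\alpha)\geq c/[\widetilde L:K^{tr}]$ and the theorem follows with an explicit constant depending only on $n$ and $K$.

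The main obstacle I expect is the degenerate case $\eta\in\{\pm 1\}$, in which the $K^{tr}$-conjugates of $\alpha$ multiply to $\pm 1$ even though $|\alpha|\neq 1$. Here the naive norm carries no information, and one cannot escape by replacing $\alpha$ with $\alpha/\tau(\alpha)$ for a $\tau$ witnessing $|\tau(\alpha)|\neq|\alpha|$ (such a $\tau$ exists, since the product of the $|\tau(\alpha)|$ equals $1$ while $|\alpha|\neq 1$), because that ratio again has trivial norm. The way around should be to use a different symmetric function of the Galois orbit $\{\tau(\alpha)\}_{\tau\in\Gal(\widetilde L/K^{tr})}$---for example a subproduct over a proper $c$-stable subset of $\Gal(\widetilde L/K^{tr})$, or an elementary symmetric function of intermediate degree, or a translate such as $\beta-1\neq 0$---chosen to produce a non-torsion totally real element of $K^{tr}$ whose degree over $K^{tr}$ stays bounded in terms of $n$. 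Organising these combinatorial choices so that the resulting bound depends only on $F$, and verifying the required Schinzel-type input over $K^{tr}$ in full generality, is the technical crux of the proof.
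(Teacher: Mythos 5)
Your reduction to $|\alpha|\neq 1$ at the identity embedding is fine, and the observation that $\beta:=\alpha\,c(\alpha)=|\alpha|^{2}$ is a real element, distinct from $1$, lying in a finite extension of $K^{tr}$ with $h(\beta)\leq 2h(\alpha)$ is exactly the paper's key move in the no-real-embedding case. But from there your route diverges, and it runs into obstacles you cannot resolve with the tools you invoke.

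The decisive gap is the one you flag yourself: taking the full norm $\eta=\mathrm{N}_{\widetilde L/K^{tr}}(\alpha)$ can produce $\pm 1$ even when $|\alpha|\neq 1$ (for instance when the $\Gal(\widetilde L/K^{tr})$-orbit pairs a factor of modulus $r$ with one of modulus $r^{-1}$), and in that degenerate case the argument yields nothing. This is not a peripheral technicality to be patched with a cleverer symmetric function; it is the reason the paper does \emph{not} norm all the way down to $K^{tr}$. There is also a second, less visible gap: even when $\eta\neq\pm 1$, an element of $K^{tr}$ is totally real \emph{over $K$}, not over $\mathbb{Q}$, so Schinzel's theorem does not apply directly, and your ``further norm through the tower $K/\mathbb{Q}$'' is not a well-defined finite norm (the extension $K^{tr}/\mathbb{Q}^{tr}$ is infinite, and $\mathbb{Q}(\eta)/(\mathbb{Q}(\eta)\cap\mathbb{Q}^{tr})$ reintroduces the same degeneracy problem).

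The paper sidesteps both issues by never descending to a totally real field at all. Instead it uses Garza's theorem \cite{Ga07}: if a positive proportion $R_{\beta,\mathbb{Q}}$ of the $\mathbb{Q}$-conjugates of $\beta$ are real, then $h(\beta)$ is bounded below by an explicit function of $R_{\beta,\mathbb{Q}}$ alone, regardless of what $\mathrm{N}(\beta)$ is. Because $\beta=\alpha\bar\alpha$ is real and sits in $F'$, the intersection of the normal hull of $F$ with $\mathbb{R}$ (a finite extension of $K^{tr}$ with a real $K$-embedding), one gets $R_{\beta,\mathbb{Q}}\geq C_{F'}>0$ via the elementary counting in \eqref{eq:1}--\eqref{eq:2}, and Garza finishes it. Your plan is missing this proportion-of-real-conjugates input; pure norm gymnastics plus Schinzel cannot substitute for it, precisely because the norm forgets the modulus information that witnesses $|\alpha|\neq 1$.
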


Using the notation of Bombieri and Zannier from \cite{BZ01}, we say that a field $F\subseteq \overline{\mathbb{Q}}$ has the Bogomolov property if and only if there is a positive constant $c$ such that the height $h$ on elements of $F$ is either zero or bounded from below by $c$. By Northcott's theorem every number field has the Bogomolov property.
Hence, the interesting cases are those where the Bogomolov property is satisfied by extensions of infinite degree over $\mathbb{Q}$. Schinzel's result implies that $\mathbb{Q}^{tr}$ has the Bogomolov property. More fields with the Bogomolov property and infinite degree over $\mathbb{Q}$, are maximal abelian \cite{AZ00} and maximal totally $p$-adic \cite{BZ01} extensions of a given number field, and fields of the form $\mathbb{Q}(E_{\rm tors})$ where $E$ is an elliptic curve defined over $\mathbb{Q}$ \cite{Ha13}.

The Bogomolov property is in general not preserved under finite extensions. This is due to the fact that there are elements in $\mathbb{Q}^{tr}(i)$ of arbitrarily small positive height (in particular Schinzel's bound does not hold if we drop the assumption $\vert \alpha \vert \neq 1$). This follows from an early result of May \cite{Ma72}, Example 1, where he studies the multiplicative groups of subfields of $\mathbb{Q}^{tr}$ and $\mathbb{Q}^{tr}(i)$. The first explicit proof of the failure of the Bogomolov property in the regarded extension is due to Amoroso and Nuccio in \cite{AN07}. A proof using dynamical height function can be found in \cite{Po13}, Remark 4.5.
An explicit sequence of elements in $\mathbb{Q}^{tr}(i)$ of height tending to zero is given by taking $n$-th roots of the element $\frac{2+i}{2-i}$ (see for example \cite{ADZ11}, Example 5.3, or Lemma \ref{lemma:1} below).

The extension $\mathbb{Q}^{tr}(i)/\mathbb{Q}^{tr}$ is essentially the only known example of a finite extension where the Bogomolov property is not preserved; i.e. in all examples of such extensions $L/F$ there is a sequence of points of small height contained in $L\cap\mathbb{Q}^{tr}(i)$. In particular, it is open whether these are the only possible examples of finite extensions not preserving the Bogomolov property. Theorem \ref{finextensions} implies that a finite extension of $K^{tr}$ has the Bogomolov property if and only if it does not contain the element $i$ (see Corollary \ref{cor:2}).

Since the Bogomolov property is in general not preserved under finite extensions, one can ask whether or not it is preserved for other special field extensions; for instance its Galois closure.

\begin{question}\label{question:2}
Is there a field which satisfies the Bogomolov property but the Galois closure over $\mathbb{Q}$ of this field does not?
\end{question}

In Example \ref{answer:2} we will construct an explicit field to provide a positive answer to this question. By a result of Widmer \cite{Wi11}, Corollary 2, it is known that the Northcott property of a field is not preserved under taking the Galois closure of this field. Here a field $F\subseteq \overline{\mathbb{Q}}$ has the Northcott property if every set of elements in $F$ with bounded height is finite. In particular, the Northcott property implies the Bogomolov property. It would be interesting to know whether the Galois closure of a field with the Northcott property necessarily satisfies the Bogomolov property.

As a further application of our result we will provide a positive answer to a question of Amoroso, David and Zannier concerning the relation between fields with the Bogomolov property and pseudo algebraically closed fields. 

\begin{definition}\rm \label{PAC}
Let $F$ be a field with algebraic closure $\overline{F}$. Then $F$ is called pseudo algebraically closed (PAC) if and only if every absolutely irreducible variety defined over $F$ has an $F$-rational point.
\end{definition}

If $F\subseteq \overline{\mathbb{Q}}$, then by \cite{FJ}, Theorem 11.2.3, $F$ is PAC if and only if for every polynomial $f(x,y)\in\mathbb{Q}[x,y]$ which is irreducible in $\overline{\mathbb{Q}}[x,y]$ there are $\alpha,\beta\in F$ with $f(\alpha,\beta)=0$. 

Obviously every algebraically closed field is PAC. Moreover every algebraic extension of a PAC field is again PAC. Hence, in sharp contrast to the Bogomolov property which is obviously inherited to subfields, the property of a field to be PAC indicates that a field is \emph{large}. This vague observation leads to the following question.

\begin{question}[\cite{ADZ11}, Problem 6.1]\label{question:1}
Is there a PAC field which satisfies the Bogomolov property?
\end{question}

The authors of \cite{ADZ11} raise some doubts towards a positive answer. In particular they disprove the Bogomolov property for some PAC fields contained in $\overline{\mathbb{Q}}$. However, surprisingly there are PAC fields satisfying the Bogomolov property. In Example \ref{answer:1} we will explicitly construct such a field as a totally imaginary finite extension of $\mathbb{Q}^{tr}$. This example comes from a deep result of Pop \cite{Pop96} concerning the arithmetic of the field $\mathbb{Q}^{tr}$.
For detailed information about PAC fields we refer to \cite{FJ}, Chapter 11.

In the next section we will study the field $K^{tr}$ and prove Theorem \ref{finextensions}. The proof of this result is elementary and the only non standard contribution is an effective lower height bound in terms of the ratio of the real embeddings of an algebraic number due to Garza \cite{Ga07} (see also \cite{Ho11} for a short proof of Garza's result).

In the last section of this note we will present explicit positive answers to Questions \ref{question:2} and \ref{question:1}.

\section{Proof of Theorem \ref{finextensions}}

Let throughout this section $K$ be any number field. In order to prove the main observation in this note, we need some more notation. For an algebraic number $\alpha$ we denote by $r_{\alpha,K}$ the number of real $K$-conjugates of $\alpha$, and we set $R_{\alpha,K}=\frac{r_{\alpha,K}}{[K(\alpha):K]}$. We have the following inequality.

\begin{equation}\label{eq:1}
[K:\mathbb{Q}]R_{\alpha,\mathbb{Q}} \geq \frac{r_{\alpha,\mathbb{Q}}[K:\mathbb{Q}]}{[\mathbb{Q}(\alpha):\mathbb{Q}]} \geq \frac{r_{\alpha,K}[K:\mathbb{Q}]}{[\mathbb{Q}(\alpha):\mathbb{Q}]} = \frac{r_{\alpha,K}}{[K(\alpha):K]}\frac{[K(\alpha):\mathbb{Q}]}{[\mathbb{Q}(\alpha):\mathbb{Q}]}\geq R_{\alpha,K}.
\end{equation}

The next two lemmas provide some basic information on the field $K^{tr}$ which are used in the proof of Theorem \ref{finextensions}.

\begin{lemma}\label{wlog}
It is $K^{tr}=(K\cap\mathbb{R})^{tr}$ and $\mathbb{Q}^{tr} \subseteq K^{tr} \subseteq \mathbb{R}$.
\end{lemma}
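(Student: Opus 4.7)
The inclusions $\mathbb{Q}^{tr}\subseteq K^{tr}$ and $K^{tr}\subseteq\mathbb{R}$ are essentially tautological from the definition: the former uses $\Gal(\overline{\mathbb{Q}}/K)\subseteq\Gal(\overline{\mathbb{Q}}/\mathbb{Q})$, while the latter follows by evaluating the defining condition at $\sigma=\mathrm{id}$. I would dispose of both in a single sentence.

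For the substantive equality, set $L=K\cap\mathbb{R}$. The inclusion $L^{tr}\subseteq K^{tr}$ is immediate by the same subgroup argument applied to $\Gal(\overline{\mathbb{Q}}/K)\subseteq\Gal(\overline{\mathbb{Q}}/L)$. The plan for the reverse inclusion $K^{tr}\subseteq L^{tr}$ is to show that for $\alpha\in K^{tr}$, the minimal polynomial of $\alpha$ over $K$ actually has coefficients in $L$; this forces it to agree with the minimal polynomial of $\alpha$ over $L$, whence the $L$-conjugates of $\alpha$ coincide with the $K$-conjugates and are therefore all real.

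In more detail: let $\alpha\in K^{tr}$. Taking $\sigma=\mathrm{id}$ in the defining condition yields $\alpha\in\mathbb{R}$. Let $g\in K[x]$ denote the minimal polynomial of $\alpha$ over $K$ and let $\beta_1,\dots,\beta_d\in\overline{\mathbb{Q}}$ be its roots; these are exactly the $K$-conjugates of $\alpha$, hence by hypothesis all lie in $\mathbb{R}$. Consequently the coefficients of $g=\prod_{i=1}^d(x-\beta_i)$, being elementary symmetric functions of real numbers, are themselves real, so $g\in\mathbb{R}[x]$; combined with $g\in K[x]$ this gives
\[
g\in K[x]\cap\mathbb{R}[x]=(K\cap\mathbb{R})[x]=L[x].
\]
Any factorisation of $g$ in $L[x]$ is also a factorisation in $K[x]$, so the irreducibility of $g$ in $K[x]$ forces its irreducibility in $L[x]$. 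As $g$ is monic with $\alpha$ as a root, it is then the minimal polynomial of $\alpha$ over $L$ as well. Hence the $L$-conjugates of $\alpha$ are exactly $\beta_1,\dots,\beta_d$, all real, so $\alpha\in L^{tr}$.

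There is no real obstacle here: the whole argument rests on the single observation that the hypothesis $\alpha\in K^{tr}$ forces the minimal polynomial of $\alpha$ over $K$ to split over $\mathbb{R}$, and a polynomial lying simultaneously in $K[x]$ and in $\mathbb{R}[x]$ must have its coefficients in $K\cap\mathbb{R}$.
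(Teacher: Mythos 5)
Your proof is correct and follows the same route as the paper: both establish the nontrivial inclusion $K^{tr}\subseteq(K\cap\mathbb{R})^{tr}$ by observing that for $\alpha\in K^{tr}$ the minimal polynomial over $K$ has only real roots, hence real coefficients, hence lies in $(K\cap\mathbb{R})[x]$ and is also the minimal polynomial over $K\cap\mathbb{R}$. Your write-up is slightly more careful in spelling out the (easy but glossed-over) point that irreducibility over $K$ forces irreducibility over the subfield $L$, but this is a matter of presentation rather than a different argument.
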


\begin{proof} It follows straight from the definition that $(K\cap\mathbb{R})^{tr}\subseteq K^{tr}$. For $\alpha\in \overline{\mathbb{Q}}$ let $f$ be its minimal polynomial defined over $K$. Then $\alpha$ is in $K^{tr}$ if and only if all roots of $f$ are real. If this is satisfied then all coefficients of $f$ are real and hence $f$ is defined over $(K\cap \mathbb{R})$. Thus, $f$ is also the minimal polynomial of $\alpha$ over $K\cap \mathbb{R}$. This immediately implies $\alpha \in (K\cap\mathbb{R})^{tr}$ and hence $K^{tr} = (K\cap \mathbb{R})^{tr}$. The second statement of the lemma is trivial.
\end{proof}

It is well known that $\mathbb{Q}^{tr}(i)$ is the maximal CM-field and hence generated by all elements having all $\mathbb{Q}$-conjugates on the unit circle (see \cite{AN07}, Proposition 2.3, or \cite{BL78}, Theorem 1). This generalizes one by one if we replace $\mathbb{Q}$ by any number field $K$.

\begin{lemma}\label{lemma:1}
Let $\alpha \in \overline{\mathbb{Q}}\setminus \{\pm1\}$ with $\vert \sigma(\alpha)\vert=1$ for all $\sigma \in \Gal(\overline{\mathbb{Q}}/K)$. Then $K^{tr}(\alpha)=K^{tr}(i)$.
\end{lemma}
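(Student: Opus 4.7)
The plan is to write $\alpha$ explicitly in terms of a real quantity living in $K^{tr}$ and a factor of $i$, exploiting the classical fact that a point on the unit circle is determined by its real part together with an imaginary sign.

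First I would set $\beta = \alpha + \alpha^{-1}$. For any $\sigma \in \Gal(\overline{\mathbb{Q}}/K)$, the hypothesis $\vert\sigma(\alpha)\vert = 1$ gives $\overline{\sigma(\alpha)} = \sigma(\alpha)^{-1}$, so $\sigma(\beta) = 2\,\mathrm{Re}(\sigma(\alpha)) \in \mathbb{R}$. Hence $\beta \in K^{tr}$, and $\alpha$ is a root of $X^2 - \beta X + 1 \in K^{tr}[X]$. In particular $K^{tr}(\alpha) = K^{tr}\bigl(\sqrt{\beta^2 - 4}\bigr)$, where $\beta^2 - 4 \neq 0$ since $\alpha \neq \pm 1$.

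Next I would set $\gamma = 1 - \beta^2/4 \in K^{tr}$, so that $\sqrt{\beta^2 - 4} = 2i\sqrt{\gamma}$. For every $\sigma \in \Gal(\overline{\mathbb{Q}}/K)$ the value $\sigma(\beta) = 2\,\mathrm{Re}(\sigma(\alpha))$ lies in $[-2, 2]$, and therefore $\sigma(\gamma) \in [0, 1]$. Since the $K$-conjugates of $\sqrt{\gamma}$ are precisely $\pm\sqrt{\sigma(\gamma)}$ as $\sigma$ varies over $\Gal(\overline{\mathbb{Q}}/K)$, they are all real, so $\sqrt{\gamma} \in K^{tr}$. Because $\alpha \neq \pm 1$ we have $\gamma > 0$ and in particular $\sqrt{\gamma} \neq 0$, so
\[
K^{tr}(\alpha) \;=\; K^{tr}\bigl(2i\sqrt{\gamma}\bigr) \;=\; K^{tr}(i),
\]
as desired.

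The only mildly subtle point in this plan is the last step: verifying that the nonnegative real number $\sqrt{\gamma}$ is actually \emph{totally real} over $K$ and not merely real. This is exactly where one must use that every $K$-conjugate of $\alpha$ (and not just $\alpha$ itself) lies on the unit circle, which forces every conjugate of $\gamma$ to be nonnegative. Everything else is routine quadratic-extension bookkeeping.
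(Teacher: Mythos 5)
Your proof is correct, and it is essentially the same argument as the paper's: both hinge on showing that the real and imaginary parts of $\alpha$ lie in $K^{tr}$. The only difference is in how the imaginary part is handled. The paper observes directly that $\frac{i}{2}(\alpha-\alpha^{-1})$ is totally real, since for any $\sigma \in \Gal(\overline{\mathbb{Q}}/K)$ one has $\sigma(i)=\pm i$ and $\sigma(\alpha)-\sigma(\alpha)^{-1}$ is purely imaginary; this avoids any discussion of square roots. You instead pass through the minimal quadratic $X^2-\beta X+1$ of $\alpha$ over $K^{tr}$ and argue that its discriminant is $-4\gamma$ with $\gamma=1-\beta^2/4$ totally nonnegative, so $\sqrt{\gamma}=\mathrm{Im}(\alpha)$ is totally real. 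That is the same fact obtained via Pythagoras rather than by a direct Galois computation, at the cost of a little sign bookkeeping (each conjugate $\tau(\sqrt\gamma)$ equals $\pm\sqrt{\tau(\gamma)}$, which you correctly note is real in either case). Both routes are valid and of comparable length; the paper's is marginally slicker, yours makes the quadratic-extension structure $K^{tr}(\alpha)=K^{tr}(\sqrt{\beta^2-4})$ a bit more explicit.
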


\begin{proof}
Let $\alpha$ be as described and $\sigma \in \Gal(\overline{\mathbb{Q}}/K)$ arbitrary.
The assumption on $\alpha$ implies $\sigma(\alpha)^{-1}=\overline{\sigma(\alpha)}$, 
where the bar denotes complex conjugation. Therefore, $\sigma(\frac{1}{2}(\alpha + \alpha^{-1}))= \frac{1}{2}(\sigma(\alpha) + \sigma(\alpha)^{-1}) \in \mathbb{R}$. 
This means that $Re(\alpha)$, the real part of $\alpha$, is in $K^{tr}$. In the same fashion we get $\sigma(\frac{i}{2}(\alpha - \alpha^{-1})) = \pm\frac{i}{2}(\sigma(\alpha) - \sigma(\alpha)^{-1}) \in \mathbb{R}$, 
which is equivalent to the statement that $Im(\alpha)$, the imaginary part of $\alpha$, is in $K^{tr}$, too. By assumption $\alpha \not\in \mathbb{R}$ and hence $Im(\alpha)\neq 0$. This yields $i = \frac{\alpha - Re(\alpha)}{Im(\alpha)}\in K^{tr}(\alpha)$ and of course $\alpha \in K^{tr}(i)$.
\end{proof}

\begin{proof}[Proof of Theorem \ref{finextensions}]
By Lemma \ref{wlog} we can and will assume without loss of generality that $K$ is a real number field. Note that this implies $K \subseteq K^{tr}$.
 
Let us assume first that $F$ has at least one real $K$-embed\-ding. This means that $F=K^{tr}(\alpha)$ for an $\alpha \in \overline{\mathbb{Q}}$ such that there is a $\sigma \in \Gal(\overline{\mathbb{Q}}/K)$ with $\sigma(\alpha) \in \mathbb{R}$.
Then the existence of a positive lower bound for the height on $F^*\setminus\{\pm1\}$ follows from Bilu's famous equidistribution result \cite{Bi97}. Using a theorem of Garza we can achieve an effective constant, which we will prove in the following paragraph. 

Let $\beta$ be an arbitrary element in $F^*\setminus\{\pm 1\}$. We choose a number field $L/K$ with $L\subseteq K^{tr}$ and $\beta\in L(\alpha)$.
Since a $K$-embedding of $L(\alpha)$ is real if and only if it is an extension of a real $K$-embedding of $K(\alpha)$, we know that there are exactly $r_{\alpha,K}[L(\alpha):K(\alpha)]$ of those. Furthermore, every real $K$-embedding of $L(\alpha)$ is an extension of a real $K$-embedding of $K(\beta)$. 
Together this yields
\begin{equation}\label{eq:2}
R_{\beta,K}=\frac{r_{\beta,K}[L(\alpha):K(\beta)]}{[L(\alpha):K]} \geq \frac{r_{\alpha,K}[L(\alpha):K(\alpha)]}{[L(\alpha):K]}= R_{\alpha,K} \neq 0 \quad.
\end{equation}
Therefore, \eqref{eq:1} implies $R_{\beta,\mathbb{Q}}\geq R_{\alpha,K}[K:\mathbb{Q}]^{-1}=C_F$. Note that by \eqref{eq:2} the positive constant $C_F$ does not depend on the choice of the generator $\alpha$. Now we can use \cite{Ga07}, Theorem 1, to achieve
\begin{equation}\label{heightestimate}
h(\beta)\geq \frac{C_F}{2} \log\left( \frac{2^{1-\nicefrac{1}{C_F}}+\sqrt{4^{1-\nicefrac{1}{C_F}} +4}}{2}\right)=c_F > 0
\end{equation}
which is the claimed result.

Now we assume $F=K^{tr}(\alpha)$ to have no real $K$-embedding. Let $F'$ be the intersection of the normal hull of $F$ over $K$ with $\mathbb{R}$. Then $F'$ is a finite extension of $K^{tr}$ that has a real $K$-embedding. Hence we know from above that there exists an effective positive constant $c_{F'}$ with $h(\beta)\geq c_{F'}$ for all $\beta\in F'^*\setminus \{\pm 1\}$.

Assume there is a $\beta \in F^*$ with $h(\beta)< \frac{c_{F'}}{2}$. As we have seen above, for all $\sigma \in \Gal(\overline{\mathbb{Q}}/ K)$ the element $\sigma(\beta)$ is not in $\mathbb{R}$. Let $\overline{\beta}$ be the complex conjugate of $\beta$. Then $\beta\overline{\beta}$ is an element in $F'$. 
But from basic properties of the height $h$
we also have $h(\beta\overline{\beta}) \leq 2h(\beta)< c_{F'}$. This means that $\vert \beta \vert = \beta \overline{\beta}=1$. By construction of $F'$, the same is true for all $K$-conjugates of $\beta$. Hence, all $K$-conjugates of $\beta$ lie on the unit circle, proving the theorem with $c_F=\frac{c_{F'}}{2}$.
\end{proof}

\begin{corollary}\label{cor:2}
A finite extension $F$ of $K^{tr}$ has the Bogomolov property if and only if $i\not\in F$.
\end{corollary}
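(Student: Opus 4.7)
The plan is to deduce both directions of the corollary directly from Theorem \ref{finextensions} and Lemma \ref{lemma:1}, using the failure of the Bogomolov property for $\mathbb{Q}^{tr}(i)$ already recorded in the introduction.

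For the ``only if'' direction I would argue by contrapositive: assume $i\in F$. Since $\mathbb{Q}^{tr}\subseteq K^{tr}\subseteq F$ (Lemma \ref{wlog}), the field $F$ contains $\mathbb{Q}^{tr}(i)$. The introduction exhibits an explicit sequence in $\mathbb{Q}^{tr}(i)$ of positive heights tending to $0$, namely the $n$-th roots of $(2+i)/(2-i)$. This sequence sits inside $F$ and witnesses that $F$ cannot satisfy the Bogomolov property.

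For the ``if'' direction, assume $i\notin F$, and let $c_F>0$ be the effective constant furnished by Theorem \ref{finextensions}. The key step is to show that any $\alpha\in F^*$ with $h(\alpha)<c_F$ is forced to equal $\pm 1$. Indeed, Theorem \ref{finextensions} applies contrapositively: if $h(\alpha)<c_F$ then every $K$-conjugate $\sigma(\alpha)$ lies on the unit circle. Assuming $\alpha\neq\pm1$, Lemma \ref{lemma:1} then yields $K^{tr}(\alpha)=K^{tr}(i)$. But $\alpha\in F$ and $K^{tr}\subseteq F$ together give $K^{tr}(\alpha)\subseteq F$, hence $i\in F$, contradicting the hypothesis. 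Therefore $\alpha=\pm1$ and $h(\alpha)=0$, so every element of $F^*$ has height either $0$ or at least $c_F$; this is the Bogomolov property.

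I do not expect any substantial obstacle: once Theorem \ref{finextensions} has been proved and Lemma \ref{lemma:1} is available, the corollary is essentially a matter of chaining the two statements, with the only mild subtlety being the appeal to Lemma \ref{wlog} to see that $\mathbb{Q}^{tr}(i)\subseteq F$ whenever $i\in F$.
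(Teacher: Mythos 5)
Your proof is correct and follows essentially the same route as the paper's: the ``only if'' direction via $\mathbb{Q}^{tr}(i)\subseteq F$ and its known failure of the Bogomolov property, and the ``if'' direction by combining Theorem \ref{finextensions} with Lemma \ref{lemma:1} to force any low-height element to be $\pm 1$. The only difference is that you spell out the contrapositive steps a bit more explicitly than the paper does.
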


\begin{proof}
Let $F/K^{tr}$ be a finite extension. If $i\in F$ then we know that $\mathbb{Q}^{tr}(i) \subseteq F$, and hence $F$ does not have the Bogomolov property. If, on the other hand, $i\not\in F$ then Lemma \ref{lemma:1} yields that the only elements in $F$ whose $K$-conjugates all lie on the unit circle are $\pm 1$. By Theorem \ref{finextensions} we can conclude that $F$ has the Bogomolov property.
\end{proof}

\section{Applications}

Now we can provide positive answers to Questions \ref{question:2} and \ref{question:1}. We will give one explicit example for each question, these examples can be varied and extended in many ways.

First we will use Corollary \ref{cor:2} to answer Question \ref{question:2}.

\begin{example}\rm \label{answer:2}
Let $p$ be your favorite prime and set $F=\mathbb{Q}(\sqrt{p})^{tr}$. By Theorem \ref{finextensions} we know that $F$ has the Bogomolov property. However, a $4$-th root of $p$ is contained in $F$, and hence the Galois closure contains $i$ and does not have the Bogomolov property.
\end{example}

Next we will use Theorem \ref{finextensions} to construct a PAC field which has the Bogomolov property, and hence give a positive answer to Question \ref{question:1}. We say that a field $F$ is formally real if and only if $-1$ cannot be written as a sum of squares in $F$.

\begin{example}\rm \label{answer:1}
By Pop's result \cite{Pop96}, Theorem $\mathfrak{S}$, every finite extension of $\mathbb{Q}^{tr}$ which is not formally real is PAC (see \cite{JR94}, Section 7, for a short explanation).
By Corollary \ref{cor:2} all finite extensions of $\mathbb{Q}^{tr}$ which do not contain $i$ have the Bogomolov property. In order to find a PAC field which has the Bogomolov property, it remains to construct  a finite extension $F/\mathbb{Q}^{tr}$ with $i\not\in F$ and such that $F$ is not formally real. Let $F$ be the splitting field of $f(x)=x^5 +x^3 +1$ over $\mathbb{Q}^{tr}$. We claim that $F$ is of the prescribed form.

The Galois group of $f(x)\in\mathbb{Q}[x]$ is isomorphic to $S_5$. Hence, since $\mathbb{Q}^{tr}/\mathbb{Q}$ is Galois, the group $\Gal(F/\mathbb{Q}^{tr})$ is isomorphic to a nontrivial normal subgroup of $S_5$. Note that the roots of $f(x)$ are not totally real. Moreover, the discriminant of $f(x)$ is equal to $3233$ which is a square in $\mathbb{Q}^{tr}$. Therefore, $\Gal(F/\mathbb{Q}^{tr})$ is isomorphic to a subgroup of $A_5$, and hence $\Gal(F/\mathbb{Q}^{tr})\cong A_5$. As this is a simple group, there is no degree two extension of $\mathbb{Q}^{tr}$ contained in $F$. In particular, $i\not\in F$.

We denote the roots of $f(x)$ by $\alpha_1,\dots,\alpha_5$. The second Newton-Girard formula yields
$$\sum_{j=1}^{5} \left( \frac{\alpha_j}{\sqrt{2}} \right)^2 =-1 \quad.$$
The element $\frac{\alpha_j}{\sqrt{2}}$ is contained in $F$ for all $j\in\{1,\dots,5\}$. Thus $F$ is not formally real, proving the claim.
\end{example}

\begin{remark}\rm
Let $F$ be the field from Example \ref{answer:1}. The lower bound we achieve for the height on elements in $F^*\setminus\{\pm 1\}$ is very low indeed. The field $F'=F\cap \mathbb{R}$ has degree $\frac{\vert A_5 \vert}{2}=30$ and some analysis shows that $F'$ has exactly two real embeddings. Therefore, with the notation from  the proof of Theorem \ref{finextensions}, it is $C_{F'}=\frac{1}{15}$ and we get
$$h(\alpha)\geq \frac{1}{60}\log\left(\frac{2^{-14} + \sqrt{4^{-14}+4}}{2}\right) > \frac{1}{2000000} \text{ for all } \alpha \in F^* \setminus \{\pm 1\}.$$
\end{remark}

\textit{Acknowledgment:} The author would like to thank Dan Haran for answering a question on totally imaginary extensions of $\mathbb{Q}^{tr}$ not containing $i$. His answer led to the construction in Example \ref{answer:1}. This work was supported by the DFG-Projekt \textit{Heights and unlikely intersections} HA~6828/1-1.

\end{document}